\documentclass[a4paper,12pt]{amsart}
\usepackage{amssymb}
\usepackage{ifthen}
\usepackage{tikz}
\usetikzlibrary{arrows.meta}
\usetikzlibrary{calc}
\usepackage{graphicx}
\usepackage{mathrsfs}
\nonstopmode

\usepackage{epsfig}
\usepackage{pstricks}
\usepackage{pst-node}
\usepackage{pst-tree}
\usepackage{pst-plot}
\usepackage{pst-text}
\usepackage{multido}
\usepackage{subfigure}

\numberwithin{equation}{section}
\newtheorem{thm}{Theorem}[section]

\newtheorem{lem}[thm]{Lemma}

\theoremstyle{definition}

\newenvironment{pf}[1][]{%
 \vskip 3mm
 \noindent
 \ifthenelse{\equal{#1}{}}%
  {{\slshape Proof. }}%
  {{\slshape #1.} }%
 }%
{\qed\bigskip}

\newcounter{alphabet}
\newcounter{tmp}
\newenvironment{Thm}[1][]{\refstepcounter{alphabet}%
\bigskip%
\noindent%
{\bf Theorem \Alph{alphabet}}%
\ifthenelse{\equal{#1}{}}{}{ (#1)}%
{\bf .}
\itshape}{\vskip 8pt}

\newcommand{\C}{{\mathbb C}}
\newcommand{\D}{{\mathbb D}}

\newcommand{\uhp}{{\mathbb H}}

\newcommand{\N}{{\mathbb N}}

\newcommand{\R}{{\mathbb R}}

\newcommand{\Z}{{\mathbb Z}}

\renewcommand{\Im}{\,{\operatorname{Im}\,}}

\renewcommand{\Re}{\,{\operatorname{Re}\,}}

\newcommand{\inv}{^{-1}}
\newcommand{\area}{{\operatorname{Area}}}

\renewcommand{\arg}{\,{\operatorname{arg}\,}}

\newcommand{\aand}{{\quad\text{and}\quad}}

\newcommand{\arth}{{\operatorname{arth}\,}}

\begin{document}
\bibliographystyle{amsplain}
\title{
Asymptotic perimeter estimates for spirallike functions
}

\author[Z. Alsharari]{Zain Shayim Alsharari}
\address{Graduate School of Information Sciences,
Tohoku University, Aoba-ku, Sendai 980-8579, Japan}
\email{zain.shayim.alsharari.q7@dc.tohoku.ac.jp, zainjapan11@gmail.com }
\author[T. Sugawa]{Toshiyuki Sugawa}
\email{sugawa@tohoku.ac.jp}
\keywords{hyperbolic distance, self-similar, logarithmic spiral}
\subjclass[2010]{Primary 30C45; Secondary 30C35}
\begin{abstract}
By a result of F.~R.~Keogh in 1959, we have the inequality $L(r,f)/M(r,f)\le 2\pi+4\log[(1+r)/(1-r)]$
for a starlike function $f$ on the unit disk, where $L(r,f)$ and $M(r,f)$
denote the length of the curve $\theta\mapsto f(re^{i\theta})$ and the maximum value
of $|f(z)|$ on the circle $|z|=r$ for $0<r<1.$
E.~Crane and D.~Markose in their 2005 paper showed that the constant $4$ is
best possible.
D.~K.~Thomas obtained in 1968 the inequality $L(r,f)/\sqrt{A(r,f)}\le 2\sqrt{\pi}(1+\log[(1+r)/(1-r)])$
for a starlike function $f$, where $A(r,f)$ is the area of the image of the disk $|z|<r$
under the mapping $f.$
In this paper, we extend these results to the case of $\lambda$-spirallike functions
for a given $\lambda$ with $-\pi/2<\lambda<\pi/2.$
\end{abstract}
\thanks{
The present work was supported in part by JSPS KAKENHI Grant Number JP25K07022.
}
\dedicatory{Dedicated to Professor Ponnusamy on the occasion of his 65th birth anniversary}
\maketitle
\section{Introduction}

For a given $\lambda\in(-\pi/2,\pi/2),$ we consider the $\lambda$-logarithmic spiral
$$
\gamma_0(t)=\exp(t e^{i\lambda}),\quad t\in\R.
$$
For a point $w\in\C,$ the arc $\{w\gamma_0(t): -\infty<t\le 0\}\cup\{0\}$
is called the $\lambda$-spiral segment joining $w$ and $0$ and will be denoted by
$[0,w]_\lambda.$
A domain $\Omega$ with $0\in\Omega\subset\C$ is called $\lambda$-spirallike
with respect to the origin
if $[0,w]_\lambda\subset\Omega$ whenever $w\in\Omega.$
A holomorphic function $f$ on the unit disk $\D=\{z\in\C: |z|<1\}$ is called
$\lambda$-spirallike if $f(0)=0$ and if
$f$ maps $\D$ univalently onto a $\lambda$-spirallike
domain with respect to the origin.
When $\lambda=0,$ $0$-spiral segment $[0,w]_0$ is nothing but the Euclidean line segment
$[0,w]$ and therefore $0$-spirallike means starlike.
It is well known that a holomorphic function $f$ on $\D$ with $f(0)=0$
is $\lambda$-spirallike if and only if the inequality
$$
\Re\left(e^{-i\lambda}\frac{zf'(z)}{f(z)}\right)>0
$$
holds for every $z\in\D^*=\D\setminus\{0\}$ (see, for instance, Duren \cite{Duren:univ}).
This condition means that the $\lambda$-argument (see \cite{KS12spl} for details)
$$
\arg_\lambda(w)=\arg w-(\tan\lambda)\log|w|
$$
is increasing along the image curve $f(re^{it})$ in $0\le t<2\pi$ for each $r\in(0,1)$ (see \eqref{eq:arg} below).
Note that $\arg_\lambda(\gamma_0(t))=0$ for the curve $\gamma_0$ defined above.

In general, for a holomorphic map $f$ on $\D,$ we define
$$
L(r,f)=\int_{|z|=r}|f'(z)||dz|=\int_0^{2\pi}r|f'(re^{i\theta})|d\theta
\aand
M(r,f)=\max_{|z|=r} |f(z)|.
$$
Keogh \cite{Keogh59} showed the following result when $f$ is bounded but
his proof is readily extended to the general case as follows (see \cite[Theorem 4.1.13]{TTV:univ}).

\begin{Thm}
Let $f$ be a starlike function on $\D.$
Then for $0<r<1$
$$
L(r,f)\le M(r,f)\left(2\pi+4\log\frac{1+r}{1-r}\right).
$$
\end{Thm}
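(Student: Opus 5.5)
Write $z=re^{i\theta}$ and $p(z)=zf'(z)/f(z)$. Since $f$ is starlike, $\Re p>0$ on $\D$ and $p(0)=1$. Then $L(r,f)=\int_0^{2\pi}|f(z)|\,|p(z)|\,d\theta$. The plan is to split $|p|$ into its real part and a remainder controlled by $|\Im p|$, via the elementary inequality $|p|\le \Re p+|\Im p|$. For the real part, $\Re p(re^{i\theta})=\partial_\theta \arg f(re^{i\theta})$, which is the starlikeness formula (the case $\lambda=0$ of the $\lambda$-argument monotonicity mentioned in the introduction). Hence
$$
\int_0^{2\pi}|f|\,\Re p\,d\theta\le M(r,f)\int_0^{2\pi}\partial_\theta\arg f\,d\theta=2\pi M(r,f),
$$
because the image curve winds once around $0$. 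This produces the term $2\pi$.

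For the imaginary part, $\Im p=-\partial_\theta\log|f(re^{i\theta})|$. Thus $|f|\,|\Im p|=|\partial_\theta|f||$, and
$$
\int_0^{2\pi}|f|\,|\Im p|\,d\theta=\int_0^{2\pi}\bigl|\tfrac{d}{d\theta}|f(re^{i\theta})|\bigr|\,d\theta,
$$
which is the total variation of $\theta\mapsto|f(re^{i\theta})|$. The aim is to bound this variation by $4M(r,f)\log\frac{1+r}{1-r}$. A crude bound $2M\cdot\#\{\text{monotone pieces}\}$ is useless because the number of pieces is uncontrolled. So I would instead go back to $|f|\,|\Im p|\le M(r,f)\,|\Im p|$ and use the integral estimate
$$
\int_0^{2\pi}|\Im p(re^{i\theta})|\,d\theta\le 4\log\frac{1+r}{1-r}.
$$

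To prove that estimate, write $p$ via the Herglotz formula, $p(z)=\int\frac{1+e^{-it}z}{1-e^{-it}z}\,d\mu(t)$ with $\mu$ a probability measure. Then $\Im p(z)=\int\frac{2r\sin(\theta-t)}{|1-re^{i(\theta-t)}|^2}\,d\mu(t)$. Applying Fubini and the triangle inequality, it suffices to bound
$$
\int_0^{2\pi}\frac{2r|\sin\theta|}{1-2r\cos\theta+r^2}\,d\theta.
$$
By symmetry this equals
$$
2\int_0^{\pi}\frac{2r\sin\theta}{1-2r\cos\theta+r^2}\,d\theta=2\bigl[\log(1-2r\cos\theta+r^2)\bigr]_0^\pi=4\log\frac{1+r}{1-r}.
$$
Adding this to the $2\pi M(r,f)$ from the real part gives the stated inequality.

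The main obstacle is avoiding any loss in the imaginary part. It is tempting to estimate the variation of $|f|$ directly; the fix is to bound $|f|$ by $M(r,f)$ pointwise before integrating. The key point is then that the $L^1$ norm of the conjugate Poisson kernel is computed exactly, and the extremal configuration, a point mass $\mu$ (the Koebe function), shows the constant $4$ arises naturally.
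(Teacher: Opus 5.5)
Your proof is correct, and its skeleton is the same as the paper's. The paper's argument is the case $\lambda=0$ of the proof of Theorem \ref{thm:main1}, following Keogh: write $L(r,f)=\int_0^{2\pi}|f|\,|p|\,d\theta$ and use $|f|\le M(r,f)$ pointwise.

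The difference is in how you bound $\int_0^{2\pi}|p(re^{i\theta})|\,d\theta$. The paper uses Littlewood's subordination theorem to replace $p$ by $p_0(z)=\frac{1+z}{1-z}$. It then cites Keogh's estimate $\int_0^{2\pi}|p_0(re^{i\theta})|\,d\theta\le 2\pi+4\log\frac{1+r}{1-r}$. You instead split $|p|\le\Re p+|\Im p|$ at the outset. The real part contributes exactly $2\pi$ for every Carath\'eodory function. For the imaginary part you use the Herglotz representation, the triangle inequality and Tonelli, ending with the closed-form $L^1$ norm of the conjugate Poisson kernel.

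So Herglotz plus convexity takes the place of Littlewood's theorem. Applied to $|p|$ itself, the same triangle inequality gives $\int_0^{2\pi}|p|\,d\theta\le\int_0^{2\pi}|p_0|\,d\theta$ directly. Your real/imaginary split is then in effect a proof of the estimate the paper cites.

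The paper's version is shorter and modular. Yours is self-contained and elementary, and it carries over verbatim to Theorem \ref{thm:main1} via $|p\cos\lambda-i\sin\lambda|\le\cos\lambda\,(\Re p+|\Im p|)+|\sin\lambda|$.

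One caveat about your closing remark. A point mass is extremal only for the intermediate bound $\int_0^{2\pi}|\Im p|\,d\theta\le 4\log\frac{1+r}{1-r}$. For the Koebe function $K$, the step $|f|\le M(r,f)$ is very lossy, and $L(r,K)/M(r,K)$ stays bounded as $r\to1^-$. So the Koebe function gives no evidence that $4$ is sharp in Theorem A. Sharpness needs the Crane--Markose construction adapted in Section 3.
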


Keogh asked in the same paper \cite{Keogh59} whether there is a bounded starlike function $f$
such that $L(r,f)>\alpha\log\frac{1+r}{1-r}$ for some $\alpha>0.$
Subsequently, Hayman \cite{Hayman61} gave an example of a bounded starlike function $f$ with
$\limsup_{r\to 1}L(r,f)/\log\frac{1+r}{1-r}>0.$
Much later, Lewis \cite{Lewis76} improved it by replacing $\limsup$ by $\liminf.$
Moreover, Crane and Markose \cite{CM05} showed that the constant $4$ in Keogh's theorem
is the best possible.
Our first result here is to extend Theorem A to spirallike functions.

\begin{thm}\label{thm:main1}
Let $f$ be a $\lambda$-spirallike function on $\D$ for some $\lambda\in(-\pi/2,\pi/2).$
Then for $r\in(0,1),$
\begin{equation}\label{eq:spiral}
L(r,f)\le M(r,f)\left(2\sqrt2\pi+4\cos\lambda\cdot\log\frac{1+r}{1-r}\right).
\end{equation}
\end{thm}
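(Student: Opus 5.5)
The plan is to follow Keogh's argument for Theorem A: write $zf'/f$ as a shifted Carath\'eodory function, pull $|f|$ out as $M(r,f)$, and bound the $L^1$ norm of what remains on the circle $|z|=r$ by a Herglotz representation.

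\textbf{Step 1: reduction.} Since $f$ is $\lambda$-spirallike, $p(z)=e^{-i\lambda}zf'(z)/f(z)$ has positive real part on $\D$ and $p(0)=e^{-i\lambda}$. Put
$$
q(z)=\frac{p(z)+i\sin\lambda}{\cos\lambda}.
$$
Then $q(0)=1$ and $\Re q>0$, so $q$ is a Carath\'eodory function, and
$$
\frac{zf'(z)}{f(z)}=e^{i\lambda}\bigl(\cos\lambda\, q(z)-i\sin\lambda\bigr).
$$
With $z=re^{i\theta}$ we have $L(r,f)=\int_0^{2\pi}|f(z)|\,|zf'(z)/f(z)|\,d\theta$. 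Hence
$$
L(r,f)\le M(r,f)\int_0^{2\pi}\bigl|\cos\lambda\, q(re^{i\theta})-i\sin\lambda\bigr|\,d\theta .
$$
Split the modulus into real and imaginary parts:
$$
|\cos\lambda\, q-i\sin\lambda|\le \cos\lambda\,\Re q+|\sin\lambda|+\cos\lambda\,|\Im q| .
$$
The first term integrates exactly to $2\pi\cos\lambda$ by the mean value property, since $\Re q(0)=1$. The second contributes $2\pi|\sin\lambda|$. Because $\cos\lambda+|\sin\lambda|\le\sqrt2$, these two give the constant $2\sqrt2\pi$.

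\textbf{Step 2: the imaginary part.} It remains to show
$$
\int_0^{2\pi}|\Im q(re^{i\theta})|\,d\theta\le 4\log\frac{1+r}{1-r}.
$$
This is the same estimate as in Keogh's proof, and it is the only real step. By the Herglotz representation,
$$
q(z)=\int\frac{e^{it}+z}{e^{it}-z}\,d\mu(t)
$$
with $\mu$ a probability measure on $[0,2\pi)$. Taking imaginary parts gives
$$
\Im q(re^{i\theta})=\int Q_r(\theta-t)\,d\mu(t),\qquad Q_r(s)=\frac{2r\sin s}{1-2r\cos s+r^2}.
$$
Apply Fubini and integrate $|Q_r|$ over a period. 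The function $Q_r$ is odd and nonnegative on $(0,\pi)$, and $Q_r(s)=\frac{d}{ds}\log(1-2r\cos s+r^2)$. Therefore
$$
\int_0^{2\pi}|Q_r(s)|\,ds=2\log\frac{(1+r)^2}{(1-r)^2}=4\log\frac{1+r}{1-r},
$$
and since $\mu$ has total mass $1$ the claimed bound follows.

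Combining Steps 1 and 2 gives \eqref{eq:spiral}. I expect no serious obstacle beyond keeping track of the normalization of $q$ in Step 1. The only lossy step is the crude bound $\cos\lambda+|\sin\lambda|\le\sqrt2$, which is why the statement has $2\sqrt2\pi$ rather than something sharper.
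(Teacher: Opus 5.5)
Your proof is correct and gives exactly the stated constants. Its skeleton is the paper's (Keogh's method): write $zf'/f=e^{i\lambda}(\cos\lambda\,q-i\sin\lambda)$ with $q$ a Carath\'eodory function (your $q$ is the paper's $p$), pull out $M(r,f)$, and finish with $\cos\lambda+|\sin\lambda|\le\sqrt2$.

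The difference is in how you get the $L^1$ bound for $q$ on $|z|=r$. The paper first estimates $|\cos\lambda\,q-i\sin\lambda|\le\cos\lambda\,|q|+|\sin\lambda|$. It then applies Littlewood's subordination theorem to $q\prec p_0$, $p_0(z)=\frac{1+z}{1-z}$, to get $\int_0^{2\pi}|q(re^{i\theta})|\,d\theta\le\int_0^{2\pi}|p_0(re^{i\theta})|\,d\theta$. Finally it cites Keogh for $\int_0^{2\pi}|p_0(re^{i\theta})|\,d\theta\le 2\pi+4\log\frac{1+r}{1-r}$.

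You instead split $|q|\le\Re q+|\Im q|$ pointwise and evaluate $\int_0^{2\pi}\Re q\,d\theta=2\pi$ by the mean value property. You then bound $\int|\Im q|$ through the Herglotz representation and Tonelli. Averaging over the probability measure $\mu$ thus replaces subordination, and everything reduces to the explicit kernel integral $\int_0^{2\pi}|Q_r(s)|\,ds=4\log\frac{1+r}{1-r}$.

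The pointwise splitting loses nothing. Since $\int_0^{2\pi}\Re p_0(re^{i\theta})\,d\theta=2\pi$ and $\int_0^{2\pi}|\Im p_0(re^{i\theta})|\,d\theta=4\log\frac{1+r}{1-r}$, the paper's constant is exactly what your decomposition produces.

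What your route buys is self-containedness: it proves the needed integral-mean estimate directly for every Carath\'eodory function, with no appeal to Littlewood's theorem or to Keogh's lemma. The paper's route is shorter given those citations. Subordination is also the more flexible tool: Littlewood's theorem controls $\int_0^{2\pi}|q(re^{i\theta})|^s\,d\theta$ for every $s>0$. The Herglotz averaging argument rests on convexity (Minkowski's inequality), so it only works directly for $s\ge1$.
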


This will be proved in the next section using the same method as Keogh \cite{Keogh59}.
Our main contribution in this paper is to verify the optimality of
the constant $4\cos\lambda$ in \eqref{eq:spiral} by emplyoing
methods from Crane and Markose \cite{CM05}

\begin{thm}\label{thm:main2}
Let $\lambda\in(-\pi/2,\pi/2).$
For every $\varepsilon\in(0,4),$ there exists a $\lambda$-spirallike function
$f$ on $\D$ such that
$$
\liminf_{r\to 1^-}\frac{L(r,f)}{M(r,f)\log\frac{1+r}{1-r}}\ge (4-\varepsilon)\cos\lambda.
$$
\end{thm}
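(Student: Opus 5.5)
The plan is to reduce to a starlike extremal construction in the spirit of Crane and Markose \cite{CM05} and to transport it by the standard correspondence between starlike and $\lambda$-spirallike functions. For a starlike $g$ on $\D$ put
$$
f(z)=z\left(\frac{g(z)}{z}\right)^{e^{i\lambda}\cos\lambda}.
$$
Then
$$
\frac{zf'(z)}{f(z)}=1+e^{i\lambda}\cos\lambda\left(\frac{zg'(z)}{g(z)}-1\right),
$$
so that
$$
\Re\left(e^{-i\lambda}\frac{zf'}{f}\right)=\cos\lambda\,\Re\frac{zg'}{g}>0,
$$
and $f$ is $\lambda$-spirallike. In particular
$$
\left|\frac{zf'}{f}\right|\ge \cos\lambda\left|\frac{zg'}{g}\right|-2 .
$$
This explains where the factor $\cos\lambda$ in Theorem~\ref{thm:main2} should come from. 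We then write
$$
L(r,f)=\int_0^{2\pi}|f(re^{i\theta})|\left|\frac{zf'}{f}\right|d\theta .
$$

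The difficulty is that $|f|=|z|\,|g/z|^{\cos^2\lambda}\exp(-\sin\lambda\cos\lambda\arg(g/z))$ also involves $\arg g$. Hence $|f|/M(r,f)$ is not simply controlled by $|g|/M(r,g)$. For this reason I would rather construct the image domain directly in logarithmic coordinates. Under $\zeta=\log w$, a $\lambda$-spirallike domain becomes a domain invariant under the semigroup of translations $\zeta\mapsto\zeta-te^{i\lambda}$, $t\ge0$, which is periodic in $2\pi i$.

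Take a bounded $\lambda$-spirallike domain $\Omega$. It is obtained from a spirallike Jordan domain, say the one bounded by $|w|=1$ in a neighbourhood of the circle, by removing a self-similar family of $\lambda$-spiral slits. Each slit runs from a boundary point inward to modulus $\rho$, with $\rho$ fixed close to $1$ depending on $\varepsilon$. The slits accumulate at every point of the outer boundary, as in \cite{CM05}: the slits are placed at $\lambda$-arguments that are dyadically refined, and at the $k$-th generation the slits have $\lambda$-argument spacing of order $2^{-k}$.

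Let $f$ be the Riemann map onto $\Omega$. Then $M(r,f)\le 1$ and $M(r,f)\to1$. The length $L(r,f)$ is bounded below by the number of slits which the level curve $f(\{|z|=r\})$ ``wraps around'' before turning back. For each such slit this contributes twice its length. The length of a $\lambda$-spiral slit between moduli $\rho$ and $1$ is $(1-\rho)/\cos\lambda$ times a factor tending to $1$. I expect, however, that the counting step yields the stronger factor $\cos\lambda$ rather than $1/\cos\lambda$ once the right normalisation is made, because the hyperbolic distance along spiral directions in the log-coordinates is scaled by $\cos\lambda$. To count the slits, I would compare hyperbolic distances in $\Omega$ with those in $\D$. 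A point of $f(\{|z|=r\})$ lies at hyperbolic distance about $\log\frac{1+r}{1-r}$ from $f(0)=0$. The self-similar structure lets us estimate, via the logarithmic coordinates and the translation invariance, the number of slit generations reached at that hyperbolic distance. That number is $N(r)\approx c\,\log\frac{1+r}{1-r}$, and the total length is about $2(1-\rho)\cdot(\text{slit count})$. The parameters are to be tuned so that the product tends to $(4-\varepsilon)\cos\lambda\log\frac{1+r}{1-r}$, uniformly as $r\to1$, giving the $\liminf$.

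The main obstacle is this counting step. We must show that for \emph{every} $r$ close to $1$, not just a sequence, the curve $f(\{|z|=r\})$ passes close to both sides of sufficiently many slits at nearly full depth while staying where $|f|$ is close to $M(r,f)$. We must also identify exactly how $\cos\lambda$ enters the hyperbolic-distance estimate. I would first try to handle this by harmonic measure estimates in a single fundamental strip of the log-coordinates and then propagate them by self-similarity. I would follow the Crane--Markose comparison of hyperbolic metrics with the metric of a slit half-plane, now sheared by the angle $\lambda$.
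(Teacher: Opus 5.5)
Your proposal does not yet contain a proof. The construction as you state it degenerates, and the estimate that produces $4\cos\lambda$ is only anticipated, not derived.

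\textbf{The construction.} If every slit runs from $\partial\D$ to the same modulus $\rho$ while the slit $\lambda$-arguments become dense, the slits are dense in the annulus $\rho\le|w|<1$. The only domain containing $0$ that avoids them is then $\{|w|<\rho\}$, and $L(r,f)$ stays bounded. Your count ``length $\approx 2(1-\rho)\times$(number of slits)'' with about $2^k$ slits in generation $k$ reflects the same problem: it would grow exponentially in the number of generations, which Theorem~\ref{thm:main1} forbids. The slits must shrink with their spacing. In the paper, in the coordinate $\zeta$ with $w=e^{2\pi i\zeta}$, generation $n$ consists of the segments $(mN^{-n},(m+T\mu)N^{-n}]$ with $\mu=ie^{i\lambda}$ and $N\nmid m$. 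So generation-$n$ slits have spacing $N^{-n}$ and length $TN^{-n}$, i.e.\ radial depth about $2\pi TN^{-n}\cos\lambda$.

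\textbf{The length bound.} ``Wrapping around the slits'' is made rigorous in Lemma~\ref{lem:1} from three facts. First, $\Psi(t)=\arg_\lambda f(re^{it})$ is increasing. Second, for $r>r_n$, $R(t)=|f(re^{it})|$ lies below the tip modulus at a slit's $\lambda$-argument and above $|w_{n,m}|$ at the intermediate test points. Third, $|\gamma'|\ge|R'|/\cos\lambda-\Psi'$. Each generation then contributes about $N^{n-j}(1-N^{-1})\cdot 2\cdot 2\pi TN^{j-n}=4\pi T(1-N^{-1})$, so $L(r,f)\ge 4\pi T(1-N^{-1})n-c_1$ for $r>r_n$. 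In this normalization no $\cos\lambda$ survives on the length side.

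\textbf{The missing step.} What you lack is the matching bound on how fast generations get enclosed, Lemma~\ref{lem:2}: $\arth r_n\le\frac{\pi T(1-N^{-1})+c_2}{2\cos\lambda}\,n+c_3$. Only this upper bound is needed. It follows from the triangle inequality, contraction under $\Pi$ and domain monotonicity of hyperbolic distance; no harmonic-measure estimates are required.
\begin{itemize}
\item One chains $\arth r_n\le c_3+\sum_{j=1}^n\rho_U(\kappa_{j-1},\kappa_j)$ through the points $\kappa_j=\zeta_{j,m_j}$.
\item Each link is pulled back to the cell $U_0$ by $\zeta\mapsto N^{j-1}\zeta-m_{j-1}$.
\item Each link is split into a piece bounded independently of $T$ and a piece running up the channel between the slits issuing from $0$ and $1$.
\item That channel contains a half-strip of width $\cos\lambda$ in which the two points are about $T(1-N^{-1})$ apart. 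Computing via $\sin\pi(z-\frac12)$, this piece costs at most $\frac{\pi T(1-N^{-1})}{2\cos\lambda}+c'$.
\end{itemize}
This is exactly where $\cos\lambda$ enters. Your expectation that it appears ``once the right normalisation is made'' is the statement to be proved, not an argument for it.

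\textbf{Combining.} One gets $\liminf_{r\to1^-}L(r,f)/\arth r\ge 8\cos\lambda\cdot\pi T(1-N^{-1})/(\pi T(1-N^{-1})+c_2)$. The $O(1)$ loss per generation disappears as $T\to\infty$. So the parameter to tune is the length-to-spacing ratio $T$ of the slits, not a depth $\rho\to1$.

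\textbf{The obstacles you flagged.} These are not the real difficulties.
\begin{itemize}
\item Since $M(r,f)\le1$ and $M(r,f)\to1$, you never need $|f|$ to be near $M(r,f)$ along the curve.
\item Passing from the radii $r_n$ to all $r$ is immediate, because Lemma~\ref{lem:1} holds for every $r>r_n$ and $r_n\to1$. Take $n$ maximal with $r_n<r$; then $\arth r\le\arth r_{n+1}$.
\end{itemize}
You were right to abandon the opening transfer from a starlike $g$; the paper likewise notes there is no simple relation between $L(r,f)$ and $L(r,g)$. But nothing in the proposal replaces it with a completed argument.
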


We mention here related estimates involving the growth of the image area:
$$
A(r,f)=\iint_{|z|\le r}|f'(z)|^2dxdy,\quad z=x+iy.
$$
Pommerenke proved the inequality (see \cite[Lemma 2]{Pom65AM})
$$
M(r,f)\le 4\sqrt{\frac{A(r,f)}{\pi}\log\frac3{1-r}},\quad 0<r<1,
$$
for an arbitrary univalent analytic function $f$ on $\D$ with $f(0)=0.$ 
Therefore, in conjunction with Theorem A,
$L(r,f)$ is estimated from above in terms of $A(r,f)$ for a starlike function $f.$
Indeed, Thomas \cite{Thomas68} showed a better inequality.

\begin{Thm}
Let $f$ be a starlike function on $\D.$
Then
$$
2\sqrt{\pi A(r,f)}\le L(r,f)\le2\sqrt{\pi A(r,f)}\left(1+\log\frac{1+r}{1-r}\right),\quad 0<r<1.
$$
\end{Thm}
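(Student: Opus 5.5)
The left inequality of Theorem B is the isoperimetric inequality applied to the Jordan curve $C_r=f(\{|z|=r\})$, which bounds the domain $f(\{|z|<r\})$. That domain has area $A(r,f)$, because $f$ is univalent. This gives $L(r,f)^2\ge 4\pi A(r,f)$ at once.

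For the right inequality we write $f(re^{i\theta})=\rho(\theta)e^{i\phi(\theta)}$. Starlikeness means $p(z)=zf'(z)/f(z)$ has positive real part. Along the circle we have $\phi'(\theta)=\Re p(re^{i\theta})>0$, so $\phi$ increases by $2\pi$. The curve $C_r$ is therefore a star-shaped curve in polar coordinates, and
$$
A(r,f)=\frac12\int_0^{2\pi}|f(re^{i\theta})|^2\Re p(re^{i\theta})\,d\theta,\qquad
L(r,f)=\int_0^{2\pi}|f(re^{i\theta})|\,|p(re^{i\theta})|\,d\theta .
$$
We split $|p|\le \Re p+|\Im p|$, which gives $L\le L_1+L_2$. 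Here $L_1=\int|f|\Re p\,d\theta$ and $L_2=\int |f||\Im p|\,d\theta$.

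For $L_1$ we apply Cauchy–Schwarz with the probability-type weight $\Re p\,d\theta$, whose total mass is $2\pi$. This gives
$$
L_1\le\Bigl(\int|f|^2\Re p\,d\theta\Bigr)^{1/2}\Bigl(\int\Re p\,d\theta\Bigr)^{1/2}=\sqrt{2A}\sqrt{2\pi}=2\sqrt{\pi A},
$$
which is the leading term.

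For $L_2$ we write $|\Im p|=(|\Im p|/\Re p)\,\Re p$ and apply Cauchy–Schwarz in the same weighted space. This gives
$$
L_2\le \sqrt{2A}\,\Bigl(\int_0^{2\pi}\frac{(\Im p)^2}{\Re p}\,d\theta\Bigr)^{1/2}.
$$
So it remains to show
$$
\int_0^{2\pi}\frac{(\Im p(re^{i\theta}))^2}{\Re p(re^{i\theta})}\,d\theta\le 2\pi\log^2\frac{1+r}{1-r}
$$
for every Carathéodory function $p$ with $p(0)=1$. That would give $L_2\le 2\sqrt{\pi A}\log\frac{1+r}{1-r}$, which is exactly the claim.

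This last bound is the main obstacle. Pointwise, $|\Im p|/\Re p\le \frac{2r}{1-r^2}$ in the worst direction, but that is too crude. The plan is therefore to use the Herglotz representation $p=\int\frac{e^{it}+z}{e^{it}-z}\,d\mu(t)$. The functional $(\Im p)^2/\Re p$ is jointly convex in $(\Re p,\Im p)$, being of the form $y^2/x$. Hence the integral is a convex functional of $\mu$, and it is maximized at extreme points $\mu=\delta_t$, that is, at Koebe-type $p(z)=(1+z)/(1-z)$. For $p=(1+z)/(1-z)$ we have $\Re p=P_r(\theta)$, the Poisson kernel, and $\Im p=Q_r(\theta)$, the conjugate kernel. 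The problem then reduces to computing $\int Q_r^2/P_r\,d\theta$ explicitly.

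One computes $Q_r/P_r=\frac{2r\sin\theta}{1-r^2}$. This gives $\int Q_r^2/P_r\,d\theta=\frac{4r^2}{(1-r^2)^2}\int\sin^2\theta\,P_r\,d\theta$, which grows like $(1-r)^{-1}$ rather than like $\log^2$. So this route fails, and the Cauchy–Schwarz split must be refined.

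Instead, I would estimate $L_2$ directly by $M(r,f)\int|\Im p|\,d\theta$ as a fallback, combined with a sharper $L^1$ bound. The intended route is a Keogh-type argument. Here $\int|\Im p|\,d\theta=\int|\partial_\theta\log|f||\,d\theta$, which is the total variation of $\log|f|$ in $\theta$. We want $\int |f|\,|d\log|f||=\int|d|f||$, the total variation of $|f|$ along the circle. Bounding this by $2\sqrt{\pi A}\log\frac{1+r}{1-r}$ uses the subordination $\log(f(z)/z)\prec 2\log\frac1{1-z}$. This is the step I expect to need the most care.
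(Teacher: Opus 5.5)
Your proof of the left inequality and your bound $L_1\le 2\sqrt{\pi A(r,f)}$ are correct. You are also right that the weighted Cauchy--Schwarz estimate for $L_2$ cannot work: the supremum of $\int_0^{2\pi}(\Im p)^2/\Re p\,d\theta$ over Carath\'eodory functions is $4\pi r^2/(1-r^2)$. After that, however, the proposal does not prove the right-hand inequality.

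\begin{itemize}
\item The fallback $L_2\le M(r,f)\int_0^{2\pi}|\Im p|\,d\theta\le 4M(r,f)\log\frac{1+r}{1-r}$ is a Keogh-type estimate and is stated in terms of $M(r,f)$. You cannot trade $M$ for $\sqrt{A}$ without loss: Pommerenke's $M(r,f)\le 4\sqrt{(A(r,f)/\pi)\log\frac{3}{1-r}}$ leaves an extra factor of order $\sqrt{\log\frac1{1-r}}$.
\item The ``intended route'' is only asserted. It would bound the total variation $\int_0^{2\pi}\bigl|\partial_\theta|f(re^{i\theta})|\bigr|\,d\theta$ by $2\sqrt{\pi A}\log\frac{1+r}{1-r}$ via $\log(f(z)/z)\prec 2\log\frac1{1-z}$. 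Subordination controls integral means of functions of $p$ alone, e.g.\ $\int_0^{2\pi}|\Im p|\,d\theta\le 4\log\frac{1+r}{1-r}$ by Littlewood's theorem. It gives no mechanism for turning the weight $|f|$ into the area, and that conversion is exactly the difficulty.
\end{itemize}
So the essential step is missing.

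The missing idea is Thomas's, which the paper runs in the proof of Theorem~\ref{thm:main3} (take $\lambda=0$). Do not split $|p|$. Apply Cauchy--Schwarz with weight $|p|$:
\[
L^2\le I(r)\int_0^{2\pi}|p|\,d\theta,\qquad I(r)=\int_0^{2\pi}|f|^2|p|\,d\theta .
\]
Then control $I$ by differentiating in $r$. Use $\partial_r|f|^2\le 2|f||f'|$, $|f||p|=r|f'|$, $|p'|\le 2\Re p/(1-|z|^2)$ (Lemma~\ref{lem:q}), and your own formula $\int_0^{2\pi}|f|^2\Re p\,d\theta=2A(r,f)$. This gives
\[
I'(r)\le 2A'(r)+\int_0^{2\pi}|f|^2|p'|\,d\theta\le 2A'(r)+\frac{4A(r)}{1-r^2}.
\]
Integrate and use that $A$ increases; with $\ell=\log\frac{1+r}{1-r}$ this yields $I(r)\le 2A(r,f)(1+\ell)$. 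Keogh's bound $\int_0^{2\pi}|p|\,d\theta\le 2\pi+4\ell$ then gives $L^2\le 2A(1+\ell)(2\pi+4\ell)\le 4\pi A(1+\ell)^2$, because $\pi>2$.

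Here the logarithm is shared between the two Cauchy--Schwarz factors. Your weighted split put all of it into one factor, which is why it blew up. Even if you keep the split and apply the same derivative trick to $\int_0^{2\pi}|f|^2|\Im p|\,d\theta$ (which is at most $A(1+2\ell)$), you only get $L_2\le 2\sqrt{A\ell(1+2\ell)}$. That is at most $2\sqrt{\pi A}\,\ell$ only when $\ell\ge 1/(\pi-2)$, so it is simpler to drop the split altogether.
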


Earlier, Thomas \cite{Thomas67} proved the weaker estimate
$L(r,f)\le 8A(1+\log\frac1{1-r^2})$ when $A(r,f)$ is bounded by a constant $A.$
In the Mathematical Review \cite{Rob67MR} of \cite{Thomas67}, Robertson suggested a way to improve the
constant $8A$ to $14A/3$ and pointed out that the same method yields the inequality
$L(r,f)\le 8A(1+\cos\lambda\log\frac1{1-r^2})$ for a $\lambda$-spirallike function $f.$
Similarly, we will show the following extension of Theorem B to spirallike functions.

\begin{thm}\label{thm:main3}
Let $\lambda\in(-\pi/2,\pi/2).$
For a $\lambda$-spirallike function $f$ on $\D,$ the inequalities
$$
2\sqrt{\pi A(r,f)}\le L(r,f)\le \sqrt{A(r,f)}
\left(2\sqrt2\cos\lambda\,\log\frac{1+r}{1-r}+\sqrt 2\pi\right)
$$
hold for each $0<r<1.$
\end{thm}

In view of the proof below, it seems that the coefficient $2\sqrt2\cos\lambda$ is not best possible.
On the other hand, the example in Section 3 implies that, for any $\varepsilon>0,$ there
is a $\lambda$-spirallike function $f$ with $A(r,f)\to \pi$ as $r\to1^-$ and
$$
\liminf_{r\to 1^-}\frac{L(r,f)}{\sqrt{A(r,f)}\log\frac{1+r}{1-r}}>
\frac{4-\varepsilon}{\sqrt{\pi}}\cos\lambda.
$$
Note that $4/\sqrt\pi=2.256\dots.$
Determining the optimal coefficient remains an open problem.

As is well known, the Koebe function $K(z)=z/(1-z)^2$ is extremal in various problems.
Therefore, it is worthwhile to examine it to our estimates.
It is known that $L(r,K)=O((1-r)^{-2})$ as $r\to 1^-$ (see \cite[p.~232]{Duren:univ}).
On the other hand, $A(r,K)=\pi r^2(1+4r^2+r^4)/(1-r^2)^4$ is asymptotically equal
to $\frac{3\pi}{8}(1-r)^{-4}$ as $r\to 1^-.$
Hence, $L(r,K)/\sqrt{A(r,K)}=O(1)=o(-\log(1-r))$ as $r\to1^-.$
In our results, the Koebe function is not extremal in the asymptotic sense.

\section{Proof of main inequalities}

\begin{pf}[Proof of Theorem \ref{thm:main1}]
Let $f$ be a $\lambda$-spirallike function on $\D.$
Then
$$
e^{-i\lambda}\frac{zf'(z)}{f(z)}=p(z)\cos\lambda-i\sin\lambda
$$
for some analytic function $p$ on $\D$ with $\Re p>0$ and $p(0)=1.$
The condition for $p$ may be rephrased to that $p(z)$ is subordinate
to the function $p_0(z)=\frac{1+z}{1-z}.$
The inequality
$$
\int_0^{2\pi} |p_0(re^{i\theta})|d\theta\le 2\pi+4\log\frac{1+r}{1-r}
$$
is known to hold for $0<r<1$ (see \cite[Theorem 1]{Keogh59}).
By Littlewood's subordination theorem \cite[Theorem 1.7]{Duren:hp}
\begin{align*}
L(r,f)&=\int_{|z|=r}|f'(z)||dz|
=\int_{|z|=r}|f(z)||p(z)\cos\lambda-i\sin\lambda|\frac{|dz|}{|z|} \\
&\le M(r,f)\left(\cos\lambda\int_0^{2\pi} |p(re^{i\theta})|d\theta
+2\pi\sin|\lambda|\right) \\
&\le M(r,f)\left(\cos\lambda\int_0^{2\pi} |p_0(re^{i\theta})|d\theta
+2\pi\sin|\lambda|\right) \\
&\le M(r,f)\left(\cos\lambda\left[2\pi+4\log\frac{1+r}{1-r}\right]
+2\pi\sin|\lambda|\right).
\end{align*}
Since $\cos\lambda+\sin|\lambda|\le \sqrt2,$ the inequality \eqref{eq:spiral} follows.
\end{pf}

In order to prove Thereom \ref{thm:main3}, we prepare the following lemma, which generalizes the well-known area formula
$\area(D)=\frac12\int_0^{2\pi}R^2d\theta$ in terms of polar coordinates.

\begin{lem}\label{lem:area}
Let $\lambda\in(-\pi/2,\pi/2).$
Suppose that $D$ is a $\lambda$-spirallike domain with respect to the origin in $\C$ 
bounded by a Jordan curve parametrized as $|w|=R(\!\arg_\lambda w)$ for a $C^1$ function
$R(\phi)>0$ on $[0,2\pi].$
Then the Euclidean area of $D$ is given by
$$
\area(D)=\frac12\int_0^{2\pi}R(\phi)^2d\phi.
$$
\end{lem}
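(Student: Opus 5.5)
Parametrize the boundary curve and apply Green's theorem in the form $\area(D)=\frac12\oint_{\partial D}\Im(\bar w\,dw)$. Along $\partial D$ write $w=R(\phi)e^{i\theta(\phi)}$, $\phi\in[0,2\pi]$. The condition $\arg_\lambda w=\phi$ means $\theta-(\tan\lambda)\log R=\phi$, so I take
$$
\theta(\phi)=\phi+(\tan\lambda)\log R(\phi).
$$
Equivalently, $w(\phi)=\exp\bigl(\log R(\phi)\,(1+i\tan\lambda)+i\phi\bigr)$. Since $R(0)=R(2\pi)$, this closes up: $\theta(2\pi)-\theta(0)=2\pi$. Thus $w$ is a $C^1$ parametrization of the Jordan curve $\partial D$ with winding number $1$ about the origin. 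The origin lies in $D$, and $\theta$ increases overall by $2\pi$, so the orientation is positive.

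Next compute $\bar w\,w'$. From $w=Re^{i\theta}$ we get $w'=(R'+iR\theta')e^{i\theta}$, hence
$$
\Im(\bar w w')=R^2\theta'=R^2\Bigl(1+\tan\lambda\,\frac{R'}{R}\Bigr)=R^2+(\tan\lambda)RR'.
$$
Therefore
$$
\area(D)=\frac12\int_0^{2\pi}R(\phi)^2\,d\phi+\frac{\tan\lambda}{4}\bigl[R(\phi)^2\bigr]_0^{2\pi}.
$$
The boundary term vanishes because $R(2\pi)=R(0)$, since both values represent the same boundary point ($\arg_\lambda$ is defined modulo $2\pi$). This gives the formula.

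The main point needing care is justifying Green's formula for this curve. It is a $C^1$ Jordan curve, piecewise smooth at the junction $\phi=0$, and Green's theorem holds for rectifiable Jordan curves. We must also confirm that $\frac12\oint\Im(\bar w\,dw)$ gives $+\area(D)$, i.e. that the curve is positively oriented. This follows from the winding-number computation above: the total change of $\arg w$ is $\theta(2\pi)-\theta(0)=2\pi>0$. Note that $\theta'$ itself could be negative somewhere, so $\theta$ need not be monotone. This does not matter: the formula $\frac12\oint\Im(\bar w\,dw)$ computes the area for any positively oriented Jordan curve, and the Jordan property is part of the hypothesis. No appeal to spirallikeness beyond the parametrization is needed.
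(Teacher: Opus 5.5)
Your proof is correct and is essentially the paper's argument: both start from the polar area formula $2\,\area(D)=\oint_{\partial D}R^2\,d\theta$, substitute $d\theta=d\phi+\tan\lambda\,dR/R$, and observe that the extra term $\tfrac{\tan\lambda}{2}\,d(R^2)$ is exact and so integrates to zero over the closed curve. Your explicit parametrization by $\phi$ and your orientation and winding-number check simply spell out details the paper leaves implicit.
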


\begin{pf}
We write $|w|=R$ and $\log w=\rho+i\theta.$
Then, by definition, $\phi=\arg_\lambda w=\theta-\rho\tan\lambda.$
Since $R=e^\rho,$ we compute
\begin{align*}
2\,\area(D)&=\int_{\partial D}R^2d\theta \\
&=\int_{\partial D}R^2(d\phi+\tan\lambda \,d\rho) \\
&=\int_{\partial D}\left[R^2d\phi+\frac{\tan\lambda}2 d(R^2)\right] \\
&=\int_{\partial D}R^2d\phi
=\int_0^{2\pi}R(\phi)^2d\phi.
\end{align*}
\end{pf}

The following inequality is a slight extension of a well-known inequality
for Carath\'eodory functions.

\begin{lem}\label{lem:q}
Let $q$ be an analytic function on $\D$ with $\Re q>0.$
Then
$$
|q'(z)|\le \frac{2\Re q(z)}{1-|z|^2},\quad z\in\D.
$$
\end{lem}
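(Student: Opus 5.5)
The plan is to reduce the statement to the Schwarz--Pick lemma by passing from the right half-plane to the unit disk. Since $\Re q>0$ on $\D$, the function
$$
\omega(z)=\frac{q(z)-1}{q(z)+1}
$$
is analytic on $\D$ and maps it into $\D$. There is no normalization $q(0)=1$ here, so $\omega(0)$ need not vanish. This is why we use the invariant form of Schwarz--Pick rather than the Schwarz lemma:
$$
|\omega'(z)|\le\frac{1-|\omega(z)|^2}{1-|z|^2},\quad z\in\D.
$$

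Next we express everything in terms of $q$. Differentiating gives $\omega'=2q'/(q+1)^2$. A direct computation gives
$$
1-|\omega|^2=\frac{|q+1|^2-|q-1|^2}{|q+1|^2}=\frac{4\Re q}{|q+1|^2}.
$$
Substituting both into the Schwarz--Pick inequality, the factor $|q+1|^2$ cancels from both sides. What remains is $2|q'(z)|\le 4\Re q(z)/(1-|z|^2)$, which is exactly the claimed bound.

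Equivalently, the argument says that the hyperbolic metric of the right half-plane, $|dw|/(2\Re w)$, is not expanded by $q$ relative to the disk metric $|dz|/(1-|z|^2)$. If the constant $1$ above is replaced by an arbitrary point $a$ of the right half-plane, the Cayley map becomes $(q-a)/(q+\bar a)$ and the same cancellation occurs.

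No step is a serious obstacle. The only points needing care are the identity for $1-|\omega|^2$ and the use of the invariant form of Schwarz--Pick rather than the version that assumes $\omega(0)=0$.
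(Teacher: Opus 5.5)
Your proof is correct, but it takes a different route from the paper. The paper never invokes Schwarz--Pick explicitly. It writes $q(0)=a+ib$ with $a>0$ and notes that $p=(q-ib)/a$ is a normalized Carath\'eodory function ($\Re p>0$, $p(0)=1$). It then quotes the classical bound $|p'(z)|\le 2\Re p(z)/(1-|z|^2)$ from the literature and transfers it back via $q'=ap'$ and $\Re q=a\Re p$. The only thing it actually verifies is that the inequality is unchanged by the real-affine maps $p\mapsto ap+ib$ of the right half-plane.

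You instead apply the M\"obius-invariant form of Schwarz--Pick to $\omega=(q-1)/(q+1)$ at an arbitrary point $z$. Your identities $\omega'=2q'/(q+1)^2$ and $1-|\omega|^2=4\Re q/|q+1|^2$ make the factor $|q+1|^2$ cancel, so no normalization is needed at all.

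The paper's argument is shorter on the page because it delegates the real content to a citation. Yours is self-contained and in effect proves the cited inequality itself. It also shows that the normalization $p(0)=1$ plays no role, since Schwarz--Pick is already invariant and is used at every point $z$, not just at the origin.

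Your closing reformulation also fits the paper well. With the curvature $-4$ normalization adopted in Section~3, the hyperbolic density of the right half-plane is $1/(2\Re w)$. So the lemma is exactly the infinitesimal form of the contraction of hyperbolic metrics under holomorphic maps, which the paper later uses in distance form.
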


\begin{pf}
Let $q(0)=a+ib$ with $a>0.$
Then there is a Carath\'eodory function $p$ (that is, $\Re p>0$ and $p(0)=1$)
such that $q=ap+ib.$
Since $|p'(z)|\le 2\Re p(z)/(1-|z|^2)$ (see \cite[Theorem 3.1.1]{TTV:univ}),
we have
$$
|q'(z)|=a|p'(z)|\le \frac{2a\Re p(z)}{1-|z|^2}=\frac{2\Re q(z)}{1-|z|^2}.
$$
\end{pf}

We next prove Theorem \ref{thm:main3} by following the arguments in \cite{Thomas68}.

\begin{pf}[Proof of Theorem \ref{thm:main3}]
The left-hand inequality is nothing but the classical isoperimetric inequality.
It remains to show the right-hand inequality.
Let $q(z)=e^{-i\lambda}zf'(z)/f(z).$
Then, by a straightforward calculation, we have
\begin{align}\label{eq:arg}
\frac{\partial}{\partial\theta} \arg_\lambda f(re^{i\theta})
&=\frac{\partial}{\partial\theta} \arg f(re^{i\theta})
-\tan\lambda\,\frac{\partial}{\partial\theta}\log |f(re^{i\theta})| \\
\notag
&=\frac{\partial}{\partial\theta} \Im\log f(re^{i\theta})
-\tan\lambda\,\frac{\partial}{\partial\theta}\Re\log f(re^{i\theta}) \\
\notag
&=\frac{1}{\cos\lambda}\Im \frac{\partial}{\partial\theta}\Big[ e^{-i\lambda}\log f(re^{i\theta})\Big] \\
\notag
&=\frac{\Re q(re^{i\theta})}{\cos\lambda}>0.
\end{align}
With the aid of the relation $|zf'(z)|=|f(z)q(z)|,$ the Cauchy-Schwarz inequality
implies
$$
L(r,f)^2=\left(\int_{|z|=r}|f(z)q(z)|\frac{|dz|}{|z|}\right)^2
\le \int_0^{2\pi}|f(z)^2q(z)|d\theta\cdot\int_0^{2\pi}|q(z)|d\theta
$$
for $z=re^{i\theta}.$
By the same estimation as in the proof of Theorem \ref{thm:main1}, we have
$$
\int_0^{2\pi}|q(z)|d\theta\le 
2\pi(\cos\lambda+\sin|\lambda|)+4\cos\lambda\log\frac{1+r}{1-r}
\le 2\sqrt2\pi+4\cos\lambda\log\frac{1+r}{1-r}.
$$
Next we estimate the term
$$
I(r)=\int_0^{2\pi}|f(z)^2q(z)|d\theta,\quad z=re^{i\theta}.
$$
Since $I(0)=0$ and
\begin{align*}
I'(r)
&=\int_0^{2\pi}\left(2\Re[e^{i\theta}\overline{f(z)}f'(z)]|q(z)|+\frac{\Re[e^{i\theta}\overline{q(z)}q'(z)]}{|q(z)|}|f(z)|^2\right) d\theta \\
&\le\int_0^{2\pi}\left(2|f(z)f'(z)q(z)|+|q'(z)f(z)^2|\right) d\theta \\
&=2\int_0^{2\pi}|f'(z)|^2|z|d\theta+\int_0^{2\pi}|q'(z)f(z)^2| d\theta,
\end{align*}
we obtain
$$
I(r)\le 2\int_0^r\int_0^{2\pi}|f'(te^{i\theta})|^2td\theta dt+
\int_0^r\int_0^{2\pi}|q'(te^{i\theta})f(te^{i\theta})^2|d\theta dt
=2A(r,f)+J(r).
$$
By Lemma \ref{lem:q}, we estimate the second term as
$$
J(r)\le 2\int_0^r\int_0^{2\pi}|f(te^{i\theta})|^2
\Re q(te^{i\theta})d\theta \frac{dt}{1-t^2}.
$$
If we write $\phi=\arg_\lambda f(te^{i\theta})$ and $R_t=R_t(\phi)=|f(te^{i\theta})|$
for a fixed $t,$ by \eqref{eq:arg}, we further estimate $J(r):$
$$
J(r)\le
2\cos\lambda\int_0^r\int_0^{2\pi}R_t^2\frac{\partial \phi}{\partial\theta}d\theta \frac{dt}{1-t^2}
=2\cos\lambda\int_0^r\int_0^{2\pi}R_t(\phi)^2d\phi \frac{dt}{1-t^2}.
$$
We now apply Lemma \ref{lem:area} to obtain
$$
\frac{J(r)}{\cos\lambda}\le 4\int_0^r A(t,f)\frac{dt}{1-t^2}
\le 4 A(r,f)\int_0^r \frac{dt}{1-t^2}
=2A(r,f)\log\frac{1+r}{1-r}.
$$
Therefore, we finally get
$$
L(r,f)^2\le 2A(r,f)\left(1+\cos\lambda\log\frac{1+r}{1-r}\right)\cdot
\left(2\sqrt2\pi+4\cos\lambda\log\frac{1+r}{1-r}\right).
$$
Since $\sqrt 2 \pi=4.44\dots>2,$ we obtain the required inequality.
\end{pf}

\section{Construction}

Let $\lambda\in(-\pi/2,\pi/2)$ be given and set $\mu=ie^{i\lambda}.$
Fix an integer $N\ge 3$ and a real number $T$ such that $T(1-N\inv)\ge 1/2.$
Let $U$ be the domain defined by
$$
U=\uhp\setminus \bigcup_{n=0}^\infty A_n,
$$
where 
$$
A_0=\bigcup_{m\in\Z}(m,\alpha_{0,m}],
\quad
A_n=\bigcup_{m\in\Z-N\Z}(mN^{-n},\alpha_{n,m}] \quad (n\ge 1)
$$
and
$$
\alpha_{n,m}=\frac{m+T\mu}{N^n}.
$$
The function $\Pi(\zeta)=e^{2\pi i\zeta}$ is a holomorphic universal covering
projection of $\uhp$ onto the punctured disk $\D^*=\D\setminus\{0\}.$
We set $\Omega^*=\Pi(U)$ and $\Omega=\Omega^*\cup\{0\}.$
Then $\Omega$ is a simply connected domain obtained by removing the
$\lambda$-spiral segment joining $a_{n,m}=\Pi(\alpha_{n,m})\in\D$ and
$a_{n,m}'=\Pi(mN^{-n})\in\partial\D$ for $n\in\N_0=\N\cup\{0\}$ and $1\le m\le N^n$ with $N\nmid m.$
Note that $\arg_\lambda(a_{n,m})=2\pi m/N^n$ and that $\Omega$ is $\lambda$-spirallike
with respect to the origin.
This construction agrees with that of Crane and Markose \cite{CM05}
when $\lambda=0.$

\begin{figure}[htbp]
\centering
\definecolor{mathorange}{RGB}{255,120,0}

\begin{tikzpicture}[scale=4.35, line cap=round, line join=round]

\def\sinlam{0.5}
\def\coslam{0.8660254038}

\begin{scope}[yshift=0cm]

\node at (0.5,1.18) {\Huge $U$};

\draw[thick] (-0.53,0) -- (1.27,0);
\node[below] at (0,-0.02) {\Huge $0$};
\node[below] at (1,-0.02) {\Huge $1$};

\foreach \n/\m/\rr in {
0/0/0.010,
0/1/0.010,
1/-1/0.007,
1/1/0.007,
1/2/0.007}
{
  \pgfmathsetmacro{\d}{pow(3,\n)}
  \pgfmathsetmacro{\xb}{\m/\d}
  \pgfmathsetmacro{\xa}{(\m-\sinlam)/\d}
  \pgfmathsetmacro{\ya}{\coslam/\d}
  \draw[very thick] (\xb,0) -- (\xa,\ya);
  \fill[black] (\xa,\ya) circle (\rr);
}

\foreach \m in {-4,-2,-1,1,2,4,5,7,8,10,11}
{
  \pgfmathsetmacro{\xb}{\m/9}
  \pgfmathsetmacro{\xa}{(\m-\sinlam)/9}
  \pgfmathsetmacro{\ya}{\coslam/9}

  \draw[thin, black!75] (\xb,0) -- (\xa,\ya);
  \fill[black] (\xa,\ya) circle (0.0045);
}

\foreach \m in {-14,-13,-11,-10,-8,-7,-5,-4,-2,-1,1,2,4,5,7,8,
10,11,13,14,16,17,19,20,22,23,25,26,28,29,31,32,34}
{
  \pgfmathsetmacro{\xb}{\m/27}
  \pgfmathsetmacro{\xa}{(\m-\sinlam)/27}
  \pgfmathsetmacro{\ya}{\coslam/27}

  \draw[thin, black!25] (\xb,0) -- (\xa,\ya);
  \fill[black] (\xa,\ya) circle (0.0025);
}

\fill[mathorange] (0,0.8660254038) circle (0.010);
\fill[mathorange] (-0.013333,0.2886751346) circle (0.008);
\fill[mathorange] (0.333333,0.2886751346) circle (0.008);
\fill[mathorange] (0.680000,0.2886751346) circle (0.008);

\foreach \m in {-4,...,11}
{
  \pgfmathsetmacro{\xz}{\m/9}
  \pgfmathsetmacro{\yz}{\coslam/9}
  \fill[mathorange] (\xz,\yz) circle (0.0048);
}

\foreach \m in {-14,...,34}
{
  \pgfmathsetmacro{\xz}{\m/27}
  \pgfmathsetmacro{\yz}{\coslam/27}
  \fill[mathorange] (\xz,\yz) circle (0.0035);
}

\node[above] at (-0.50,0.90) {\Large $\alpha_{0,0}$};
\node[above] at (0.50,0.90) {\Large $\alpha_{0,1}$};

\node[mathorange] at (0.00,0.98) {\large $\zeta_{0,0}$};
\node[left] at (-0.44,0.36) {\normalsize $\alpha_{1,-1}$};
\node[mathorange] at (-0.02,0.36) {\normalsize $\zeta_{1,0}$};

\node[black] at (0.14,0.36) {\normalsize $\alpha_{1,1}$};
\node[mathorange] at (0.34,0.36) {\normalsize $\zeta_{1,1}$};

\node[black] at (0.49,0.36) {\normalsize $\alpha_{1,2}$};
\node[mathorange] at (0.67,0.36) {\normalsize $\zeta_{1,2}$};
\end{scope}

\draw[
  ultra thick,
  -{Latex[length=5.5mm,width=4.5mm]}
]
(0.42,-0.16) -- (0.42,-0.66);

\node[right] at (0.46,-0.40) {\Huge $\Pi$};

\begin{scope}[shift={(0.32,-1.72)}, scale=0.90]

\draw[very thick] (0,0) circle (1);
\node at (1.25,-0.08) {\Huge $\Omega$};

\draw[very thick, domain=0:1, samples=80, variable=\s]
plot ({exp(-5.44139809*\s)*cos(360*(0-0.5*\s))},
      {exp(-5.44139809*\s)*sin(360*(0-0.5*\s))});

\foreach \m in {1,2}
{
  \draw[very thick, domain=0:1, samples=80, variable=\s]
  plot ({exp(-1.81379936*\s)*cos(120*(\m-0.5*\s))},
        {exp(-1.81379936*\s)*sin(120*(\m-0.5*\s))});
}

\foreach \m in {1,2,4,5,7,8}
{
  \draw[thin, black!45, domain=0:1, samples=35, variable=\s]
  plot ({exp(-0.60459979*\s)*cos(40*(\m-0.5*\s))},
        {exp(-0.60459979*\s)*sin(40*(\m-0.5*\s))});
}

\foreach \m in {1,2,4,5,7,8,10,11,13,14,16,17,19,20,22,23,25,26}
{
  \draw[thin, black!35, domain=0:1, samples=20, variable=\s]
  plot ({exp(-0.20153326*\s)*cos(13.333333*(\m-0.5*\s))},
        {exp(-0.20153326*\s)*sin(13.333333*(\m-0.5*\s))});
}

\pgfmathsetmacro{\x}{exp(-5.44139809)*cos(-180)}
\pgfmathsetmacro{\y}{exp(-5.44139809)*sin(-180)}
\fill[black] (\x,\y) circle (0.020);

\foreach \m in {1,2}
{
  \pgfmathsetmacro{\x}{exp(-1.81379936)*cos(120*(\m-0.5))}
  \pgfmathsetmacro{\y}{exp(-1.81379936)*sin(120*(\m-0.5))}
  \fill[black] (\x,\y) circle (0.018);
}

\foreach \m in {1,2,4,5,7,8}
{
  \pgfmathsetmacro{\x}{exp(-0.60459979)*cos(40*(\m-0.5))}
  \pgfmathsetmacro{\y}{exp(-0.60459979)*sin(40*(\m-0.5))}
  \fill[black] (\x,\y) circle (0.013);
}

\foreach \m in {1,2,4,5,7,8,10,11,13,14,16,17,19,20,22,23,25,26}
{
  \pgfmathsetmacro{\x}{exp(-0.20153326)*cos(13.333333*(\m-0.5))}
  \pgfmathsetmacro{\y}{exp(-0.20153326)*sin(13.333333*(\m-0.5))}
  \fill[black] (\x,\y) circle (0.008);
}

\pgfmathsetmacro{\x}{exp(-5.44139809)*cos(180)+0.05}
\pgfmathsetmacro{\y}{exp(-5.44139809)*sin(180)}
\fill[mathorange] (\x,\y) circle (0.020);

\foreach \m in {1,2,3}
{
  \pgfmathsetmacro{\x}{exp(-1.81379936)*cos(120*\m)}
  \pgfmathsetmacro{\y}{exp(-1.81379936)*sin(120*\m)}
  \fill[mathorange] (\x,\y) circle (0.018);
}

\foreach \m in {1,...,9}
{
  \pgfmathsetmacro{\x}{exp(-0.60459979)*cos(40*\m)}
  \pgfmathsetmacro{\y}{exp(-0.60459979)*sin(40*\m)}
  \fill[mathorange] (\x,\y) circle (0.013);
}

\foreach \m in {1,...,27}
{
  \pgfmathsetmacro{\x}{exp(-0.20153326)*cos(13.333333*\m)}
  \pgfmathsetmacro{\y}{exp(-0.20153326)*sin(13.333333*\m)}
  \fill[mathorange] (\x,\y) circle (0.008);
}

\node[black] at (-0.09,-0.06) {\small $a_{0,0}$};
\node[black] at (0.21,0.20)  {\small $a_{1,1}$};
\node[black] at (-0.27,0.06) {\small $a_{1,2}$};

\node[mathorange] at (0.13,0.06) {\small $w_{0,0}$};
\node[mathorange] at (-0.11,0.24) {\small $w_{1,1}$};
\node[mathorange] at (-0.11,-0.23) {\small $w_{1,2}$};
\node[mathorange] at (0.31,-0.01) {\small $w_{1,0}$};
\end{scope}

\end{tikzpicture}

\caption{The domains $U$ and $\Omega$ and the covering projection
$\Pi(\zeta)=e^{2\pi i\zeta}$.}
\label{fig:construction}
\end{figure}
Let $f:\D\to \Omega$ be a conformal homeomorphism with $f(0)=0$ and $f'(0)>0.$
Then, by construction, $f$ is a (bounded) $\lambda$-spirallike function.
The restriction $f:\D^*\to\Omega^*$ lifts to a conformal homeomorphism $\tilde f:\uhp\to U$
via the covering map $\Pi$ so that $f\circ\Pi=\Pi\circ\tilde f$ on $\uhp.$
We consider the sets
$$
\tilde\Gamma_n=\{\zeta_{n,m}: m\in\Z\}, \quad
\zeta_{n,m}=\frac{m+1/2+T\mu}{N^n}
$$
and $\Gamma_n=\Pi(\tilde\Gamma_n)=\{w_{n,m}: m=0,1,2,\dots, N^n-1\},$
where $w_{n,m}=\exp(2\pi i\zeta_{n,m}).$
Note that $\Gamma_n\subset\Omega$ and that $\arg_\lambda (w_{n,m})=2\pi(m+1/2)/N^n.$
Let $z_{n,m}=f\inv(w_{n,m})$ and set 
$$
r_n=\max\{|z_{n,m}|: m=0,1,2,\dots, N^n-1\}.
$$
Then the image of the circle $C_r=\{z: |z|=r\}$ under $f$ is a Jordan curve in $\Omega$,
the inside of which contains all the points $w_{n,m}$ for $r_n<r<1.$

We will show, as in \cite{CM05}, the following two results.

\begin{lem}\label{lem:1}
For the above function $f,$ the inequality
$$
L(r,f)\ge 4\pi T(1-N\inv)n-c_1(T)
$$
holds for $r_n<r<1,$
where $c_1(T)$ is a constant depending only on $T.$
\end{lem}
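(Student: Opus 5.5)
For $r_n<r<1$ the Jordan curve $f(C_r)$ encloses every point $w_{k,m}$ with $k\le n$. We bound its length from below by counting how often it must wind around the tips $a_{k,m}$ of the removed spiral slits. Since $\Omega$ is $\lambda$-spirallike, $f(C_r)$ is a closed curve around $0$ on which $\arg_\lambda$ increases by exactly $2\pi$. It is convenient to work on the lift: the arc $\tilde f(\{re^{i\theta}\})$ is a curve $\tilde\gamma_r$ in $U$. This curve is invariant under $\zeta\mapsto\zeta+1$ in the sense that one period of it projects to $f(C_r)$. On it, $\Re\zeta$ minus the corresponding $\lambda$-shift increases by $1$ per period. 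Because $\Pi$ has $|\Pi'(\zeta)|=2\pi|\Pi(\zeta)|$, the Euclidean length of $f(C_r)$ equals $2\pi\int_{\tilde\gamma_r}e^{-2\pi\Im\zeta}|d\zeta|$ over one period. Lower bounds on the length of $\tilde\gamma_r$ in the strip $\Im\zeta\lesssim T\cos\lambda$ therefore translate into lower bounds for $L(r,f)$, up to a factor bounded below by $e^{-2\pi T\cos\lambda}$, which is absorbed into constants depending on $T$.

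The key geometric step concerns a slit $(mN^{-k},\alpha_{k,m}]$ lying between two consecutive points $\zeta_{k,m-1},\zeta_{k,m}$, which are inside points. The region enclosed by $f(C_r)$ corresponds to the part of $U$ below $\tilde\gamma_r$, that is, the component containing points near $\Im\zeta\to\infty$ is outside. Hence $\tilde\gamma_r$ must separate each $\zeta_{k,m}$ from $i\infty$ within $U$, so it passes below... but wait: the interior of $f(C_r)$ contains $0$, which corresponds to $i\infty$. So $\tilde\gamma_r$ separates $\zeta_{k,m}$, together with $i\infty$, from the real axis. To travel in $U$ from the region above $\zeta_{k,m-1}$ to above $\zeta_{k,m}$ while staying below both points, the curve must go around the tip $\alpha_{k,m}$. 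That is, it must descend from height $\ge\Im\zeta_{k,m}=T\cos\lambda\,N^{-k}$ to below it, pass the tip, and come back up. Actually a curve staying above height $TN^{-k}\cos\lambda$ cannot enclose $\zeta_{k,m}$ from below. So in each gap of the level-$k$ slits, the curve has to dip below $\zeta_{k,m}$ to lie beneath it and rise again above the tip $\alpha_{k,m}$, whose height is the same. More precisely, between the slit based at $mN^{-k}$ with $N\nmid m$ and the next ones at level $k$, the curve must pass under all $\zeta_{k,j}$ while passing over all slit tips $\alpha_{j,\cdot}$ with $j<k$. This yields, per period, vertical oscillation of order $T\cos\lambda\,N^{-k}$ occurring about $N^k(1-N^{-1})$ times, since that is the number of new slits at level $k$ per unit period. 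The vertical travel is therefore about $2T\cos\lambda(1-N^{-1})$ per level. Combining this with the horizontal component coming from the spiral direction $\mu$ (the slits are tilted, and the curve goes around a tip whose offset from its base is $T\mu N^{-k}$), I expect a length contribution on the order of $2T(1-N^{-1})$ per level in $\zeta$. Making this precise requires care. The factor $2\pi$ from $|\Pi'|$ then gives $4\pi T(1-N^{-1})$ per level, and summing over $k=1,\dots,n$ gives the main term. Levels where the rounding fails are absorbed into $c_1(T)$.

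The main obstacle is the precise per-level length count: showing that the pieces of $\tilde\gamma_r$ attributed to different levels $k$ are disjoint. I would try to handle this by projecting onto the direction perpendicular to $\mu$, i.e.\ using the coordinate $\Im(\bar\mu\,\cdot)$-type linear functional, which makes the tilted slits vertical. I would then argue, level by level, with disjoint horizontal windows around each new slit at level $k$. In each window the curve must cross the band of heights $[\tfrac12 \cdot, 1\cdot]\,TN^{-k}$ twice. The condition $T(1-N^{-1})\ge1/2$ presumably guarantees that these bands for successive $k$ do not overlap, so the crossings are disjoint.
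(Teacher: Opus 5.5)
\textbf{Verdict: genuine gap.} Your overall picture matches the paper's. The curve $f(C_r)$, or its lift $\tilde\gamma_r$, must pass over every slit tip and under every point of $\Gamma_n$, and the forced radial excursions are added level by level. The paper does this in the $w$-plane with $R(t)=|f(re^{it})|$, $\Psi(t)=\arg_\lambda f(re^{it})$ and the bound $|\gamma'(t)|\ge |R'(t)|/\cos\lambda-\Psi'(t)$. But the lemma is only useful with the exact coefficient $4\pi T(1-N\inv)$ in front of $n$: Theorem~\ref{thm:main2} takes $T$ large, and any loss in this coefficient destroys the sharpness of $4\cos\lambda$. Your bookkeeping loses this coefficient in two places.

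First, the factor $e^{-2\pi T\cos\lambda}$ multiplies the main term. It cannot be absorbed into an additive constant $c_1(T)$, and for large $T$ it makes the bound worthless. What saves the argument is that the excursion attached to a level-$k$ slit takes place essentially at heights $\Im\zeta\le TN^{-k}\cos\lambda$. There $|\Pi'|=2\pi e^{-2\pi\Im\zeta}\ge 2\pi(1-2\pi TN^{-k}\cos\lambda)$, so the loss is $O(T^2N^{-k})$ per level and hence summable. The paper gets this by measuring the excursion in $R=|f|$ directly and using $e^{-x}\le 1-x+x^2/2$.

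Second, the size of the excursion and the way you make the pieces disjoint both need fixing.
\begin{itemize}
\item Comparing with $\zeta_{k,m}$ gives nothing, since $\zeta_{k,m}$ and $\alpha_{k,m}$ have the same height. The comparison points must be the finest-level points $\zeta_{n,\cdot}$ flanking the slit; this is where $r>r_n$ enters.
\item With those points, the curve must rise from below height $TN^{-n}\cos\lambda$ to above $TN^{-k}\cos\lambda$ and come back down. Measured in the slit direction, $\Im(e^{-i\lambda}\zeta)$, this forces a total variation of at least $2T(N^{-k}-N^{-n})$ per slit. The $\sin\lambda$ offsets of the two flanking points cancel, and this is exactly where the needed gain $1/\cos\lambda$ comes from.
\item Your device of crossing the disjoint bands $[\frac12,1]\cdot TN^{-k}$ twice captures only half of this excursion. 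It gives at best $2\pi T(1-N\inv)n$, and hence $2\cos\lambda$ instead of $4\cos\lambda$ in Theorem~\ref{thm:main2}.
\item No height bands are needed. The windows $2\pi(m-\frac12)N^{-n}<\arg_\lambda<2\pi(m+\frac12)N^{-n}$, $1\le m\le N^n$, are pairwise disjoint. Each contains exactly one slit, whose level is fixed by the power of $N$ dividing $m$, and $\arg_\lambda$ is monotone along the curve. So the full excursions over these windows can simply be added; this is the paper's partition by the $t_k$.
\end{itemize}
The hypothesis $T(1-N\inv)\ge 1/2$ plays no role in this lemma; it is used in Lemma~\ref{lem:2}.
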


\begin{pf}
Fix $N$ and $n$ and $r\in(r_n,1).$
Then we define continuous functions $R$ and $\Theta$ on $\R$ by
$$
R(t)=|f(re^{it})|
\aand
\Psi(t)=\arg_\lambda (f(re^{it}))=\arg f(re^{it})-(\tan\lambda) \log R(t).
$$
Since $f$ is $\lambda$-spirallike, the function $\Psi$ is increasing and satisfies
$\Psi(t+2\pi)=\Psi(t)+2\pi$ for $t\in\R.$
Then the curve $\gamma(t)=f(re^{it})$ is expressed by
$$
\gamma(t)=\exp\left((1+i\tan\lambda)\log R(t)+i\Psi(t)\right).
$$
Then we compute
$$
\gamma'(t)=\gamma(t)\left((1+i\tan\lambda)\frac{R'(t)}{R(t)}+i\Psi'(t)\right)
$$
and, since $R(t)<1,$ we obtain
\begin{equation}\label{eq:gamma}
|\gamma'(t)|\ge |1+i\tan\lambda||R'(t)|-\Psi'(t)R(t)
\ge \frac{|R'(t)|}{\cos\lambda}-\Psi'(t).
\end{equation}
Let $t_k~ (k=1,2,\dots, 2N^n)$ be real numbers with
$\Psi(t_k)=\pi k/N^n$ and $t_1<t_2<\dots <t_{2N^n}<t_1+2\pi.$
Put $t_{2N^n+1}=t_{1}+2\pi.$

For each integer $j$ with $0\le j\le n-1$ we denote by $I_j$ the set of
integers $m$ with $m\in N^j\Z-N^{j+1}\Z$ and $1\le m\le N^n.$
Any $m\in I_j$ can be expressed as $m=N^jm'$ for some integer
$m'$ with $1\le m'\le N^{n-j}$ and $N\nmid m'.$
We now observe that the cardinality of $I_j$ is $N^{n-j}-N^{n-j-1}
=N^{n-j}(1-N\inv).$
For each $m\in I_j,$ we have
$\Psi(t_{2m})=2\pi m/N^n=2\pi m'/N^{n-j}=\arg_\lambda(a_{n-j,m'})$
and thus
$$
R(t_{2m})<|a_{n-j,m'}|=\exp(-2\pi TN^{j-n}\cos\lambda).
$$
On the other hand, since the image curve $f(C_r)$ separates
the points $w_{m,n}$ from $\partial\Omega,$ we have
$$
R(t_{2m+1})>|w_{m,n}|=\exp(-2\pi TN^{-n}\cos\lambda)
\aand
R(t_{2m-1})>|w_{m-1,n}|=\exp(-2\pi TN^{-n}\cos\lambda)
$$
for $m\in I_j.$
Hence, by \eqref{eq:gamma} and the previous inequalities,
\begin{align*}
& ~ \quad \int_{t_{2m-1}}^{t_{2m+1}}|\gamma'(t)|dt \\
&\ge\int_{t_{2m-1}}^{t_{2m+1}} 
\left(\frac{|R'(t)|}{\cos\lambda}-\Psi'(t)\right)dt \\
&\ge \frac{R(t_{2m+1})-R(t_{2m})+R(t_{2m-1})-R(t_{2m})}{\cos\lambda}-\Psi(t_{2m+1})+\Psi(t_{2m-1}) \\
&\ge \frac{2}{\cos\lambda}\left(
\exp(-2\pi TN^{-n}\cos\lambda)-\exp(-2\pi TN^{j-n}\cos\lambda)
\right)-\Psi(t_{2m+1})+\Psi(t_{2m-1}).
\end{align*}
We now apply the elementary inequalities
$$
1-x\le e^{-x}\le 1-x+\frac{x^2}2,\quad x\in[0,+\infty),
$$
to obtain
\begin{align*}
&\exp(-2\pi TN^{-n}\cos\lambda)-\exp(-2\pi TN^{j-n}\cos\lambda) \\
&\ge 2\pi TN^{j-n}\cos\lambda-2\pi TN^{-n}\cos\lambda
-2\pi^2T^2N^{2j-2n}\cos^2\lambda.
\end{align*}
Since the intervals $(t_{2m-1},t_{2m+1})$ do not overlap and
are contained in $(t_1,t_1+2\pi)$ for $1\le m\le N^n,$ we obtain
\begin{align*}
&~ \quad L(r,f ) \\
&\ge \sum_{j=0}^{n-1}\sum_{m\in I_j}
\int_{t_{2m-1}}^{t_{2m+1}}|\gamma'(t)|dt \\
&\ge 2\pi T\sum_{j=0}^{n-1} N^{n-j}(1-N\inv)\left(
2N^{j-n}-2N^{-n}-2\pi TN^{2j-2n}\cos\lambda\right)
-\Psi(t_1+2\pi)+\Psi(t_1) \\
&=2\pi T(1-N\inv)\left(2n-\frac{2-2N^{-n}}{1-N\inv}
-2\pi TN\inv\frac{1-N^{-n}}{1-N\inv}\cos\lambda
\right)-2\pi \\
&> 4\pi T(1-N\inv)n-4\pi T(1+\pi TN\inv\cos\lambda)-2\pi.
\end{align*}
We now see that the choice $c_1=4\pi T(1+\pi T)+2\pi$ works.
 \end{pf}

\begin{lem}\label{lem:2}
The following inequality holds:
$$
\arth r_n\le  \frac{\pi T(1-N\inv)+c_2}{2\cos\lambda}\cdot n+c_3,
$$
where $c_2=c_2(\lambda, N)$ is a constant depending only on $\lambda$ and $N$
and $c_3=c_3(\lambda, N, T)$ is a constant depending only on $\lambda, N$ and $T.$
\end{lem}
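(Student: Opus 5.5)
By conformal invariance of the hyperbolic metric, I would turn the statement into a bound on a hyperbolic distance in $U$. With the density $|dz|/(1-|z|^2)$ on $\D$, we have $\arth|z_{n,m}|=d_\D(0,z_{n,m})=d_\Omega(0,w_{n,m})$. Since $r_n$ is the maximum of the $|z_{n,m}|$, it suffices to bound $d_\Omega(0,w_{n,m})$ uniformly in $m$.

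Next, $d_\Omega\le d_{\Omega^*}$ on $\Omega^*$. The projection $\Pi:U\to\Omega^*$ is a local isometry of hyperbolic metrics, so it does not increase distances. Hence
$$
d_\Omega(0,w_{n,m})\le d_\Omega(0,w_{0,0})+d_U(\zeta_{0,0},\zeta_{n,m'})
$$
for any lift $\zeta_{n,m'}$ of $w_{n,m}$. The first term depends only on $\lambda,N,T$ and goes into $c_3$. By the triangle inequality, I would then bound $d_U(\zeta_{0,0},\zeta_{n,m'})$ along a chain $\zeta_{0,m_0},\zeta_{1,m_1},\dots,\zeta_{n,m_n}$. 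Here $\zeta_{j+1,m_{j+1}}$ lies in the same "channel" between consecutive slits of level $\le j$ as $\zeta_{j,m_j}$; such a chain exists because each channel of level $j$ contains $N$ points of $\tilde\Gamma_{j+1}$ below it. The claim then reduces to the one-step bound
$$
d_U(\zeta_{j,m_j},\zeta_{j+1,m_{j+1}})\le \frac{\pi T(1-N\inv)+c_2}{2\cos\lambda},
$$
with $c_2$ independent of $j$ and $T$. The leftover from horizontally moving to the correct channel at level $0$ is absorbed into $c_3$.

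For the one-step bound, I would exhibit an explicit subdomain of $U$ containing the path. Between heights $T\cos\lambda\,N^{-j-1}$ and $T\cos\lambda\,N^{-j}$, the slits of level $\le j$ bound a slanted half-strip: the sides are parallel to $\mu=ie^{i\lambda}$, the horizontal spacing is $N^{-j}$, so the perpendicular width is $N^{-j}\cos\lambda$. The slits of level $\ge j+1$ only reach height $T\cos\lambda\,N^{-j-1}$. Therefore the full slanted strip $S_j$ of perpendicular width $N^{-j}\cos\lambda$, truncated below at height $T\cos\lambda\,N^{-j-1}$, lies in $U$ except near its bottom end. Moving along the centre line of $S_j$ from height $T\cos\lambda\,N^{-j}$ down to height $T\cos\lambda\,N^{-j-1}$ covers a length $TN^{-j}(1-N\inv)$ measured along the direction $\mu$. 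A strip of width $a$ has hyperbolic density $\pi/(2a)$ on its centre line, so this portion costs at most
$$
\frac{\pi}{2N^{-j}\cos\lambda}\cdot TN^{-j}(1-N\inv)=\frac{\pi T(1-N\inv)}{2\cos\lambda}.
$$
The remaining pieces go into $c_2/(2\cos\lambda)$. These are the stretches near the top and bottom ends of the channel, and the final sideways move to $\zeta_{j+1,m_{j+1}}$, which sits among the tips of the level-$(j+1)$ slits.

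The main obstacle is making this remainder uniform. I would use self-similarity: the affine map $\zeta\mapsto N^j\zeta-m$ carries the configuration of slits of levels $j$, $j+1$, $j+2$ near the channel onto a fixed configuration. Deeper slits lie below height $T\cos\lambda\,N^{-j-2}$. So I would pick a fixed comparison domain $V\subset N^jU-m$, namely the part of the standard channel lying above height $\tfrac12T\cos\lambda\,N^{-1}$ together with a neighbourhood of the target point; by monotonicity $d_U\le d_{N^{-j}(V+m)}$. I would also use $T(1-N\inv)\ge1/2$, which keeps the tips separated from the points $\zeta_{j+1,\cdot}$ by a definite amount. The sharp part, the strip passage, needs the explicit strip comparison above. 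For the end pieces I would bound the hyperbolic density by $1/\operatorname{dist}(\zeta,\partial V)$ along a path of bounded Euclidean length (after scaling) staying a definite distance from $\partial V$. I expect this gives a contribution depending only on $\lambda$ and $N$, and this is the step I would check most carefully, since it must not grow with $T$.
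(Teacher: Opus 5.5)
Your proposal is correct and follows essentially the same route as the paper. It uses the same chain of lifts $\zeta_{j,m_j}$ in $U$, with each $\zeta_{j+1,m_{j+1}}$ in the channel of $\zeta_{j,m_j}$, a center-line passage down each slanted channel giving the main term $\pi T(1-N^{-1})/(2\cos\lambda)$, and end pieces made independent of $j$ and $T$ by self-similarity and the hypothesis $T(1-N^{-1})\ge 1/2$.

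The paper tightens your two loose points. First, the full strip is not contained in $U$, so its density $\pi/(2a)$ is not an upper bound; the paper stops the passage at an intermediate point $\tau_{j-1}$ and compares with the half-strip via $\sin\pi(z-1/2)$. This gives $\frac12\log(\sinh\pi y_1/\sinh\pi y_0)$, i.e. the main term plus a constant depending only on $\lambda$. Second, instead of your $1/\operatorname{dist}$ estimate for the end pieces, it uses the fixed comparison domain (after rescaling) formed by the half-strip above the level-one tips together with a disk of radius $\cos\lambda/(2N)$ about the target point.
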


In order to show the lemma, we recall the notion of hyperbolic metric.
For details, see \cite{KL:hg} for instance.
Let $\rho_V(z)|dz|$ denote the hyperbolic metric 
(of constant Gaussian curvature $-4$)
on a subdomain $V$ in $\C$ whose boundary contains at least two points.
It induces the hyperbolic distance
$$
\rho_V(z_1,z_2)=\inf_\gamma \int_\gamma \rho_V(z)|dz|,
$$
where the infimum is taken over all rectifiable curves $\gamma$
joining $z_1$ and $z_2$ in $V.$
For instance, when $V$ is the unit disk $\D,$ it is well known that
$$
\rho_\D(z_1,z_2)=\arth\left|\frac{z_1-z_2}{1-\overline{z_1}z_2}\right|,
\quad \arth x=\frac12\log\frac{1+x}{1-x}.
$$
A remarkable property of the hyperbolic distance is contractivity for holomorphic maps; 
namely, $\rho_V(f(z_1), f(z_2))\le \rho_W(z_1,z_2)$ for a holomorphic map $f:W\to V$ and for $z_1,z_2\in W.$
In particular, $\rho_V(z_1,z_2)\le \rho_W(z_1,z_2)$ whenever
$z_1,z_2\in W\subset V.$

\begin{pf}[Proof of Lemma \ref{lem:2}]
Without loss of generality, we may assume that $\lambda\ge 0.$
By the definition of $r_n,$ we have $r_n=|z_{n,m}|$ for some integer
$m=m_n$ with $0\le m\le N^n-1.$
We now choose integers $m_{n-1},\dots, m_0$ inductively as follows.
Suppose that $m_{j}$ was chosen so that $0\le m_{j}\le N^{j}-1.$
Let $m_{j-1}$ be an integer $m$ such that $|m+\frac12-(m_{j}+\frac12)/N|$ is minimal.
By the choice, we have 
$$
\frac1{2N}-1\le m_{j-1}-\frac{m_{j}}N\le \frac1{2N}
$$
and thus $0\le m_j-Nm_{j-1}<N.$
Let $l_j$ be the integer $m_j-Nm_{j-1}.$
Note that $m_0=0$ and $\kappa_0=\zeta_{0,0}.$
Then, for $m=m_n,$
\begin{align*}
\arth r_n=\rho_\D(0,z_{n,m})
&\le \rho_\D(0,z_{0,0})+\rho_\D(z_{0,0},z_{n,m}) \\
&=\rho_\Omega(0,w_{0,0})+\rho_\Omega(w_{0,0}, w_{n,m}) \\
&\le c_3+\rho_{\Omega^*}(\Pi(\kappa_0), \Pi(\kappa_n)) \\
&\le c_3+\sum_{j=1}^n \rho_{U}(\kappa_{j-1}, \kappa_j).
\end{align*}
where $c_3=\rho_\Omega(0,w_{0,0}).$

We next estimate $\rho_{U}(\kappa_{j-1}, \kappa_j).$
Fix $j$ with $1\le j\le n$ and set $$\tau_{j-1}=[(m_{j-1}+1/2)+T'\mu]/N^{j-1},$$ where
$T'=T/N+1/2.$
Then $\rho_{U}(\kappa_{j-1}, \kappa_j)\le \rho_{U}(\kappa_{j-1}, \tau_{j-1})
+\rho_{U}(\tau_{j-1}, \kappa_j).$
Let $$U_j=\{\zeta\in U: m_jN^{-j}<\Re[e^{-i\lambda}\zeta]<(m_j+1)N^{-j}\}$$ for $j=0,1,\dots, n.$
By the self-similarity of the domain $U,$ the function $u(\zeta)=N^{j-1}\zeta-m_{j-1}$
maps $U_{j-1}$ onto $U_{0}.$
It is simple to check that $u(\kappa_{j-1})=1/2+T\mu=\kappa_0,$
$u(\tau_{j-1})=1/2+T'\mu=\tau_0,$ and
$u(\kappa_j)=(l_j+1/2+T\mu)/N=\zeta_{1,l_j}.$
Then
$$
\rho_U(\kappa_j,\tau_{j-1})\le \rho_{U_{j-1}}(\kappa_j,\tau_{j-1})
=\rho_{U_0}(u(\kappa_j), u(\tau_{j-1}))
=\rho_{U_0}(\zeta_{1,l_j}, \tau_0).
$$
We want a bound for $\rho_{U_0}(\zeta_{1,l_j}, \tau_0)$ independent of $T.$
To this end, we further consider the map $v(\zeta)=\zeta+T\mu/N.$
Observe that the disk $|\zeta-\zeta_{1,l}|<(\cos\lambda)/2N$ is contained in $U_0$ for each $l=0,1,\dots, N-1,$
by the assumption $T(1-N\inv)\ge 1/2.$
Thus the two points $\zeta_{1,l}$ and $\tau_0$ are both contained in $v(W_l)\subset U_0,$ where 
$$
W_l=\{\zeta: 0<\Re[e^{-i\lambda}\zeta]<1, \Im\zeta>0\}
\cup\{\zeta: |\zeta-\tfrac{l+1/2}N|<\tfrac{\cos\lambda}{2N}\}.
$$
Hence,
$$
\rho_{U_0}(\zeta_{1,l}, \tau_0)\le
\rho_{v(W_l)}(\zeta_{1,l}, \tau_0)
=\rho_{W_l}(\tfrac{l+1/2}N, \tfrac{1+\mu}2)\le c, 
$$
where $\displaystyle c=\max_{l=0,1,\dots,N-1}\rho_{W_l}(\tfrac{l+1/2}N, \tfrac{1+\mu}2)$
is a constant depending only on $N$ and $\lambda.$

Similarly, we have $\rho_{U}(\kappa_{j-1}, \tau_{j-1})
\le\rho_{U_{j-1}}(\kappa_{j-1}, \tau_{j-1})=\rho_{U_0}(\kappa_0,\tau_0).$
The function $\varphi(z)=\sin\pi(z-1/2)$ maps the domain
$\Delta=\{z: 0<\Re z<1, \Im z>0\}$ conformally onto the upper half-plane $\uhp.$
We now consider the function 
$$h(z)=ze^{i\lambda}\cos\lambda+T\mu/N=e^{i\lambda}(z\cos\lambda+iT/N).$$
By the construction of the domain $U$ and the assumption $\lambda\ge0,$
we see that $h(\Delta)\subset U_0.$
Note that 
$$
h\inv(\tau_0)=\frac{\tau_0-T\mu/N}{e^{i\lambda}\cos\lambda}
=\frac{1+ie^{i\lambda}}{2e^{i\lambda}\cos\lambda}
=\frac12+i\cdot\frac{1-\sin\lambda}{2\cos\lambda}
=:\frac12+iy_0
$$
and 
$$
h\inv(\kappa_0)=\frac{T(1-N\inv)\mu+1/2}{e^{i\lambda}\cos\lambda}
=\frac12+i\cdot\left(\frac{T(1-N\inv)}{\cos\lambda}-\frac{\tan\lambda}2\right)
=:\frac12+iy_1
$$
belong to $\Delta.$
Therefore,
\begin{align*}
\rho_{U_0}(\kappa_0,\tau_0)&\le \rho_{h(\Delta)}(\kappa_0,\tau_0) \\
&=\rho_\Delta(1/2+iy_1,1/2+iy_0) \\
&=\rho_{\uhp}(\varphi(1/2+iy_1),\varphi(1/2+iy_0)) \\
&=\rho_{\uhp}(i\sinh \pi y_1, i\sinh \pi y_0) \\
&=\frac12\log\frac{\sinh\pi y_1}{\sinh\pi y_0} \\
&\le\frac{\pi(1-N\inv)}{2\cos\lambda}T+c',
\end{align*}
where $c'$ is a constant depending only on $\lambda.$
Summing up the above inequalities gives us
$$
\arth r_n\le c_3+\sum_{j=1}^n \big[\rho_U(\kappa_j,\tau_{j-1})+
\rho_U(\tau_{j-1},\kappa_{j-1})\big]
\le c_3+\left(\frac{\pi(1-N\inv)T}{2\cos\lambda}+c'+c\right)n,
$$
which proves the required assertion with $c_2=c+c'.$
\end{pf}

We are now ready to show our second theorem.

\begin{pf}[Proof of Theorem \ref{thm:main2}]
We first note that $M(r,f)\to 1$ as $r\to1^-$ 
for the function $f$ constructed above.
By Lemmas \ref{lem:1} and \ref{lem:2}, we have
$$
\frac{L(r_n,f)}{\arth r_n}\ge\frac{4\pi T(1-N\inv)n-c_1(T)}%
{\frac{\pi T(1-N\inv)+c_2(\lambda, N)}{2\cos\lambda}\cdot n+c_3(\lambda, N, T)}.
$$
Hence,
$$
\liminf_{n\to\infty}\frac{L(r_n,f)}{\arth r_n}
\ge \frac{8\pi T(1-N\inv)\cos\lambda}{\pi T(1-N\inv)+c_2(\lambda, N)}.
$$
We now choose $T$ so large that the right-hand side is bigger
than $(8-2\varepsilon)\cos\lambda.$
Consequently,
$$
\liminf_{r\to 1^-}\frac{L(r,f)}{M(r,f)\log\frac{1+r}{1-r}}
=\liminf_{r\to 1^-}\frac{L(r,f)}{2\,\arth r}
\ge (4-\varepsilon)\cos\lambda
$$
as required.
\end{pf}

\section{Concluding remarks}

We end this paper with a couple of remarks.
As we mentioned above, the example $f$ constructed in the previous section is
exactly same as the function constructed by Crane and Markose \cite{CM05}
when $\lambda=0.$
In this case, they observed in \cite[Lemma 3]{CM05} a self-similar property of $f.$
Let us see a similar property of our function $f$ here.
We will use the same notation as in the previous section.

Since the image domain of $U$ under the mapping $\zeta\mapsto\zeta/N$ is
$$
\tilde U=\uhp\setminus \left( A_0'\cup \bigcup_{n=1}^\infty A_n\right),
$$
where
$$
A_0'=\bigcup_{m\in\Z}(m,\alpha_{1,m}].
$$
We set $\Omega'=\Pi(U').$
Then the inverse image of $\Omega$ under the mapping $w\mapsto w^N$ is exactly $\Omega'.$
Note that $\Omega\subset\Omega'$ and $\Omega'\setminus\Omega=[a_{0,0},a_{1,0})_\lambda,$ part of the $\lambda$-spiral $[a_{0,0},0]_\lambda.$
Let $f_1(z)=f(z^N)^{1/N}$ be the analytic branch of the $N$-th root of $f(z^N)$
with $f_1'(0)>0$ (power roots of similar functions will be taken in the same way in the following).
Then $f_1$ is a conformal homeomorphism of $\D$ onto $\Omega'.$
We now observe that $\varphi=f_1\inv\circ f:\D\to\D$ is univalent and $\varphi(\D)=\D\setminus\gamma,$
where $\gamma=f_1\inv([a_{0,0},a_{1,0})_\lambda)$ is a slit emanating from a point
of $\partial\D.$
When $\lambda=0,$ we know that $\gamma$ is a line segment of the form $[\beta,1)$
for some $\beta\in(0,1)$ by the symmetry $\varphi(\bar z)=\overline{\varphi(z)}.$
However, when $\lambda\ne0$ we cannot detect the form of $\gamma$ at the moment.
Anyway, since $f_1(z)=f(z^N)^{1/N}=f(\varphi\inv(z)),$ $f$ has a self-similarity in the sense that
$f(w)^N=f(\varphi(w)^N)$ for $w=\varphi\inv(z).$
If we put $\psi(w)=\varphi(w)^N,$ we have the relation $f(w)^N=f(\psi(w)).$
By applying this repeatedly, we have $f(w)^{N^n}=f(\psi^{\circ n}(w)),$
where $\psi^{\circ n}$ means the $n$-times iteration of $\psi.$
If we expand $f$ in the form $f(z)=a_1z+a_2z^2+\dots,$ we have
$$
f(z^{N^n})^{N^{-n}}=a_1^{N^{-n}}z\left(1+\frac{a_2z^{N^n}}{a_1N^n}+\dots\right).
$$
We note that $(\psi^{\circ n}(w))^{N^{-n}}/w\to 1$ as $n\to\infty.$
Hence, as in \cite{CM05}, we obtain the expression
$$
f(w)=f(\psi^{\circ n}(w))^{N^{-n}}=\lim_{n\to\infty}\psi^{\circ n}(w)^{N^{-n}}.
$$

We finally mention a relation between $\lambda$-spirallike functions and starlike functions
(see \cite{KS12spl} for some details).
For a $\lambda$-spirallike function $f$ on $\D$ with $f(0)=f'(0)-1=0,$
there is a starlike function $g$ on $\D$ such that
$$
\frac{f(z)}{z}=\left(\frac{g(z)}{z}\right)^\alpha,
\quad \alpha=e^{i\lambda}\cos\lambda.
$$
It seems, however, that there is no simple relation between $L(r,f)$ and $L(r,g).$

\section{Conflict of interest}
The authors declare that there is no conflict of interest
regarding the publication of this paper.

\def\cprime{$'$} \def\cprime{$'$} \def\cprime{$'$}
\providecommand{\bysame}{\leavevmode\hbox to3em{\hrulefill}\thinspace}
\providecommand{\MR}{\relax\ifhmode\unskip\space\fi MR }
\providecommand{\MRhref}[2]{%
  \href{http://www.ams.org/mathscinet-getitem?mr=#1}{#2}
}
\providecommand{\href}[2]{#2}

\end{document}